\numberwithin{equation}{section}
\theoremstyle{plain}
\newtheorem{theorem}{Theorem}[section]
\newtheorem{lemma}[theorem]{Lemma}
\newtheorem{corollary}[theorem]{Corollary}
\newtheorem{proposition}[theorem]{Proposition}
\newtheorem{definition}[theorem]{Definition}
\title[Cubes from products with one term missing]{Cubes from products of terms in progression with one term missing}
\author{Kyle Pratt}
\address{Brigham Young University, Department of Mathematics, Provo, UT 84602, USA}
\email{kyle.pratt@mathematics.byu.edu}
\subjclass[2020]{11D41, 11G05, 11Y50}
\keywords{Diophantine equations, products of terms in arithmetic progression, elliptic curves, elliptic curve Chabauty}
\begin{document}
\date{}

\maketitle

\begin{abstract}
Let $5 \leq k \leq 11$ and $0\leq i \leq k-1$ be integers. We determine all solutions to the equation
\begin{align*}
n(n+d)(n+2d)\cdots(n+(i-1)d)(n+(i+1)d) \cdots (n+(k-1)d) = y^3
\end{align*}
in integers $n,d,y$ with $ny \neq 0$, $d\geq 1$, and $\textup{gcd}(n,d) = 1$. Our method relies on the theory of elliptic curves, including elliptic curve Chabauty over a number field. As an application, we answer a question of Das, Laishram, Saradha, and Sharma concerning rational points on a certain superelliptic curve.
\end{abstract}

\section{Introduction}

Erd\H{o}s and Selfridge famously proved that the product of two or more positive consecutive integers is never a perfect power \cite{ErdSel1975}. That is, they showed the equation
\begin{align}\label{eq:ErdSelf eqn}
n(n+1)(n+2)\cdots(n+k-1) = y^\ell
\end{align}
has no solutions in positive integers $n,y,k,\ell$ with $k,\ell \geq 2$. 

There are many ways to generalize the Diophantine equation \eqref{eq:ErdSelf eqn}. One generalization, which has been studied a great deal, is as follows: determine whether there are solutions to
\begin{align}\label{eq:generalized ErdSelf eqn}
n(n+d)(n+2d) \cdots (n+(k-1)d) = y^\ell
\end{align}
in positive integers $n,d,y,k,\ell$ with $\text{gcd}(n,d)=1$ and $k,\ell \geq 2$. Hence, rather than taking products of consecutive integers as in \eqref{eq:ErdSelf eqn}, one takes products of consecutive terms in an arithmetic progression. (See \cite[pp. 22--26]{Sho2006} for a survey of some older results on \eqref{eq:generalized ErdSelf eqn} and its variants. See \cite{Ben2018,BenSik2016,BenSik2020,DLS2018,DLSS2023,GHP2009,HK2011,HTT2009}, say, and the associated references for an introduction to more recent work on this and related problems.) It is widely believed that \eqref{eq:generalized ErdSelf eqn} has no solutions once $k$ is sufficiently large (see, e.g., \cite[p. 356]{BenSik2020} and \cite{ErdProb672}). In fact, it ought to be the case that one can remove some terms $n+jd$ from the left-hand side of \eqref{eq:generalized ErdSelf eqn}, and the resulting equation will still be unsolvable (for $k$ sufficiently large). 

Some recent work in this direction is due to Bennett \cite{Ben2023}, who studied equations of the form
\begin{align}\label{eq:Bennett eqn}
\prod_{\substack{j=0 \\ j \neq i}}^{k-1} (n+jd) = y^\ell,
\end{align}
where $5 \leq k \leq 8, i \in \{0,1,\ldots,k-1\}$, and $\ell \geq 3$ is a prime. Thus, these are equations where a single term $n+id$ is removed from the product. A variety of methods are used in \cite{Ben2023} to study this equation, such as the modularity of elliptic curves and a clever analysis of the resulting congruences (for $\ell \geq 7$), rational points on elliptic curves of rank zero (for $\ell=3$), and Chabauty-type methods (for $\ell=5$, relying on work of Hajdu and Kov\'{a}cs \cite{HK2011}). It is claimed in \cite[Theorem 1.2]{Ben2023} that all solutions to \eqref{eq:Bennett eqn} with $5 \leq k \leq 8$ arise when $k=5$ and $\ell=3$.  However, the analysis in \cite{Ben2023} is incomplete in the case $\ell=3$, as some solutions to \eqref{eq:Bennett eqn} are missing.

The main result of this paper is the following theorem.

\begin{theorem}\label{thm:main thm}
Let $5 \leq k \leq 11$ and $0\leq i \leq k-1$ be integers. The only solutions to the equation
\begin{align}\label{eq:main eqn of study}
\prod_{\substack{j=0 \\ j \neq i}}^{k-1}(n+jd) = y^3
\end{align}
in integers $n,d,y$ with $ny \neq 0$, $d \geq 1$, and $\textup{gcd}(n,d) = 1$ are given by
\begin{equation}\label{eq:solns for main thm}
\begin{split}
(k,i,n,d) = \, &(5,0,-14,5),(5,0,-11,5),(5,1,-8,3),(5,3,-4,3), \\
&(5,4,-9,5),(5,4,-6,5), (7,3,-10,7),(7,3,-32,7).
\end{split}
\end{equation}
\end{theorem}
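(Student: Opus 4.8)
The plan is to reduce the Diophantine equation \eqref{eq:main eqn of study} to the problem of finding rational (or appropriately integral) points on a finite collection of explicit elliptic curves, one cluster of curves for each pair $(k,i)$, and then to resolve each such problem either by a rank computation or, when the rank is too large, by elliptic curve Chabauty over a suitable number field.

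First I would exploit the coprimality condition $\gcd(n,d)=1$ to control the arithmetic of the factors $n+jd$. Since consecutive terms in the progression are coprime up to small factors coming from primes $p \leq k$, each factor $n+jd$ can be written in the form $n+jd = a_j x_j^3$ for some cubefree integer $a_j$ whose prime divisors all lie below $k$, and whose product over $j \neq i$ is itself a cube. The standard technique here is to bound the finitely many possible signatures $(a_j)_{j \neq i}$: for each prime $p \leq k$ at most one or two of the terms $n+jd$ can be divisible by $p$ (this is where $\gcd(n,d)=1$ and the spacing $d$ enter), so the $p$-adic valuations of the factors are highly constrained, and one obtains a finite, explicitly enumerable list of admissible tuples $(a_j)$. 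This is essentially the descent step and it converts \eqref{eq:main eqn of study} into finitely many systems of simultaneous equations $n+jd = a_j x_j^3$.

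Next, for each admissible tuple, I would eliminate $n$ and $d$ by taking three of the surviving factors, say those indexed by $j_1, j_2, j_3$, and forming the linear relation among $n+j_1 d, n+j_2 d, n+j_3 d$ dictated by the arithmetic progression. Writing each as $a_{j_\nu} x_{j_\nu}^3$ and clearing the linear dependence yields a ternary equation of the shape $A u^3 + B v^3 = C w^3$, i.e.\ a curve of genus one, which after a rational change of variables becomes a Weierstrass model of an elliptic curve $E$ over $\mathbb{Q}$ (or over the field $K = \mathbb{Q}(\sqrt[3]{a})$ obtained when the cube roots of the $a_j$ are irrational). I would then compute the Mordell--Weil rank of each such $E$. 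When the rank is zero, the solutions come from the torsion subgroup, which is finite and trivially enumerable, and one pulls back the admissible torsion points to the original variables $(n,d,y)$, checking the coprimality and $ny \neq 0$ constraints; this recovers the listed solutions and rules out all others for those tuples.

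The main obstacle, and the reason the theorem requires more than a rank computation, is the family of tuples for which the associated elliptic curve has positive rank, so that rational points are infinite and cannot be listed directly. Here I expect to pass to an auxiliary number field $K$ over which the ternary cubic factors, and to apply elliptic curve Chabauty: one realizes the desired solutions as $K$-points on an elliptic curve whose image under a suitable degree map lands in $\mathbb{P}^1(\mathbb{Q})$, and provided the rank of the Mordell--Weil group over $K$ is strictly less than the degree $[K:\mathbb{Q}]$, the Chabauty method produces a provably complete finite set of points. The delicate points will be (i) verifying the rank inequality needed for Chabauty to apply, which may force a careful choice of which three factors to isolate and which field $K$ to work over, (ii) carrying out the rank and Mordell--Weil basis computations rigorously, and (iii) organizing the case analysis so that the number of admissible tuples, and hence curves, stays manageable across all pairs $(k,i)$ with $5 \leq k \leq 11$. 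I would expect the $k=5$ and $k=7$ cases to be precisely those yielding the exceptional solutions in \eqref{eq:solns for main thm}, and the larger values of $k$ to produce only empty solution sets, consistent with the heuristic that removing one term from a long progression still leaves an unsolvable equation.
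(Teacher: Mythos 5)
Your proposal follows essentially the same route as the paper: the descent $n+jd = a_jx_j^3$ into finitely many coefficient vectors, elimination of $n$ and $d$ via ternary cubics $ax^3+by^3+cz^3=0$ resolved through rank-zero elliptic curves (with torsion points pulled back to recover the sporadic solutions), and elliptic curve Chabauty over $K=\mathbb{Q}(\sqrt[3]{5})$ for the positive-rank curves arising at $k=7$, $i=3$. The one refinement you leave implicit is that the paper must combine \emph{two} simultaneous ternary cubics---one factored over $\mathbb{Q}$, one over $K$---to manufacture the single equation amenable to Chabauty, but this sits squarely inside the strategy you describe.
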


It would be interesting to extend Theorem \ref{thm:main thm} to larger values of $k$, and to allow for more terms to be missing from the products.

Theorem \ref{thm:main thm} allows us to answer a question raised by Das, Laishram, Saradha, and Sharma \cite[p. 1710]{DLSS2023} concerning rational points on a certain superelliptic curve.

\begin{corollary}\label{cor:rat pts}
All rational points $(x,y) \in \mathbb{Q}^2$ on the curve 
\begin{align}\label{eq:dlss curve}
y^3=(x+1)(x+2)(x+3)(x+5)(x+6)(x+7)
\end{align}
satisfy $y=0$, or $(x,y) = \left(-\frac{17}{7},\frac{120}{49} \right)$, or $(x,y) = \left(-\frac{39}{7},\frac{120}{49} \right)$.
\end{corollary}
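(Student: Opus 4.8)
The plan is to reduce the determination of rational points on \eqref{eq:dlss curve} to the case $(k,i) = (7,3)$ of Theorem \ref{thm:main thm}, and then to read off the answer. First I would dispose of the degenerate points: if $y = 0$, then $x$ must equal one of the roots $-1,-2,-3,-5,-6,-7$ of the right-hand side, and conversely each such $x$ yields a point with $y = 0$. So from here on I would assume $y \neq 0$.

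Next I would clear denominators. Write $x = p/q$ in lowest terms with $q \geq 1$, and set $n := p+q$ and $d := q$, so that $x + j = (p+jq)/q = (n + (j-1)d)/q$ for each integer $j$. The six factors $x+1,x+2,x+3,x+5,x+6,x+7$ then become $(n+md)/q$ with $m \in \{0,1,2,4,5,6\}$, and \eqref{eq:dlss curve} takes the form
\begin{align*}
\prod_{\substack{m=0 \\ m \neq 3}}^{6}(n+md) = (yq^2)^3.
\end{align*}
Here $d = q \geq 1$ and $\gcd(n,d) = \gcd(p+q,q) = \gcd(p,q) = 1$. Moreover $yq^2$ is a rational number whose cube equals the integer $\prod_{m}(n+md)$, hence $Y := yq^2 \in \Z$. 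Since $y \neq 0$ we have $Y \neq 0$, and $n = 0$ would force $x = -1$ and thus $y = 0$, so $n \neq 0$ as well. Therefore $(n,d,Y)$ is an integer solution of \eqref{eq:main eqn of study} with $k = 7$, $i = 3$, $d \geq 1$, $\gcd(n,d) = 1$, and $nY \neq 0$.

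Applying Theorem \ref{thm:main thm} then pins down $(n,d) \in \{(-10,7),(-32,7)\}$, these being the only entries of \eqref{eq:solns for main thm} with $(k,i) = (7,3)$. Unwinding $q = d$ and $p = n - q$ gives $x = p/q \in \{-17/7,\,-39/7\}$, and substituting each back into \eqref{eq:dlss curve} yields $y = 120/49$ in both cases: the products of the six numerators each equal $120^3 = 1728000$, while $q^6 = 49^3$. This produces exactly the two claimed nontrivial points, completing the proof.

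The reduction itself is elementary, so there is no serious obstacle at this stage; essentially all of the difficulty has been absorbed into Theorem \ref{thm:main thm}. The two points that warrant explicit care are the integrality of $Y = yq^2$, which rests on the fact that a rational number with integral cube is itself an integer, and the coprimality bookkeeping $\gcd(p+q,q) = \gcd(p,q) = 1$, which is exactly what licenses the hypothesis $\gcd(n,d)=1$ needed to invoke the theorem.
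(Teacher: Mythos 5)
Your proof is correct and follows essentially the same route as the paper: clear denominators to get an integer solution of \eqref{eq:main eqn of study} with $k=7$, $i=3$, invoke Theorem \ref{thm:main thm}, and unwind $(n,d)=(-10,7),(-32,7)$ into the two nontrivial points. The only (harmless) difference is the denominator bookkeeping: the paper writes $y=t/u$ in lowest terms and proves $u=b^2$ by two-sided divisibility, whereas you note directly that $yq^2$ is a rational number with integral cube, hence an integer.
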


The points on \eqref{eq:dlss curve} with $y=0$ are the ``trivial'' rational points
\begin{align*}
(-1,0),(-2,0),(-3,0),(-5,0),(-6,0),(-7,0).
\end{align*}
The authors of \cite{DLSS2023} noted the existence of the ``nontrivial'' rational point $\left(-\frac{17}{7},\frac{120}{49} \right)$, but did not provably determine all of the rational points on \eqref{eq:dlss curve}.

\subsection{Outline of the paper} 

In Section \ref{sec:prelim}, we show how Corollary \ref{cor:rat pts} follows from Theorem \ref{thm:main thm}. We then prove an elementary but important lemma (Lemma \ref{lem:factorization of terms}) detailing the structure of the terms $n+jd$ in any solution to \eqref{eq:main eqn of study}. This gives rise to the notion of a \emph{coefficient vector}  (Definition \ref{defn: coeff vector}). A large part of our work is devoted to determining the coefficient vectors that can correspond to solutions to \eqref{eq:main eqn of study}. It will transpire that solutions to \eqref{eq:main eqn of study} give rise to many different ternary cubic equations (see \eqref{eq:ternary cubic eqn}), and these cubic equations can be shown to be unsolvable in many cases by relating them to elliptic curves of rank zero (Lemma \ref{lem:selmer}). We end the section with a general description of our strategy for determining potential coefficient vectors, which relies extensively on computer calculation. (All the computer code used in this paper, written in Magma \cite{Magma} and Sage \cite{Sage}, is available at our ``Cubes from products with one term missing'' GitHub repository \cite{Pratt_cubes_2025}.)

In Section \ref{sec:proving most of main thm}, we use elementary arguments and the results of Section \ref{sec:prelim} to solve \eqref{eq:main eqn of study} in all cases of $k$ and $i$ except $k=7,i=3$. There are two coefficient vectors in this case that correspond to solutions to \eqref{eq:main eqn of study} (see \eqref{eq:solns for main thm}). All the associated ternary cubic equations correspond to elliptic curves of positive rank, so the results of Section \ref{sec:prelim} are inapplicable.

In the last section of the paper, Section \ref{sec:finishing proof of main thm}, we finish the analysis in the case $k=7,i=3$ and give the proof of Theorem \ref{thm:main thm}, using a result on the simultaneous solutions to a pair of ternary cubic equations (Proposition \ref{prop:only solns to pair of cubic eqns}). Computations in the pure cubic field $K = \mathbb{Q}(\sqrt[3]{5})$ show that any solution to the pair of cubic equations gives rise to a point on an elliptic curve $E$ over $K$. All the points on $E$ arising in this way can be determined with the elliptic curve Chabauty method (see Lemma \ref{lem:EC over K}).

\section{Preliminaries and proof strategy}\label{sec:prelim}

We first show how Corollary \ref{cor:rat pts} follows from Theorem \ref{thm:main thm}.
\begin{proof}[Proof of Corollary \ref{cor:rat pts} assuming Theorem \ref{thm:main thm}]
Let $(x,y) \in \mathbb{Q}^2$ satisfy
\begin{align*}
y^3=(x+1)(x+2)(x+3)(x+5)(x+6)(x+7).
\end{align*}
We may assume $y \neq 0$. Since $1\cdot 2 \cdot 3 \cdot 5 \cdot 6 \cdot 7$ is not a perfect cube, we may also assume $x \neq 0$.  We then write $y = t/u$ and $x=a/b$, where $a$ and $t$ are nonzero integers, $u$ and $b$ are positive integers, and $\text{gcd}(t,u)=\text{gcd}(a,b)=1$. Inserting the expressions for $x,y$ and multiplying through by $b^6u^3$, we obtain
\begin{align*}
t^3b^6 = u^3(a+b)(a+2b)(a+3b)(a+5b)(a+6b)(a+7b).
\end{align*}
This implies $u^3 \mid t^3b^6$, but since $\text{gcd}(t,u)=1$ we have $u^3 \mid b^6$. We similarly find $b^6 \mid u^3$, and since $b,u$ are both positive we deduce $b^6=u^3$ (thus $u=b^2$). Canceling, we find
\begin{align*}
t^3 = (a+b)(a+2b)(a+3b)(a+5b)(a+6b)(a+7b).
\end{align*}
After a suitable change of variables, we have a solution to \eqref{eq:main eqn of study} with $k=7,i=3$. Since $x = a/b, y = t/b^2$, we finish by appealing to Theorem \ref{thm:main thm}.
\end{proof}

We state some definitions in preparation for further discussion. An integer $n$ is \emph{cube-free} if $n$ is not divisible by $p^3$ for any prime $p$. Any nonzero integer $n$ can be written uniquely as $n=cf^3$, where $c$ is positive and cube-free; we say $c$ is the \emph{cube-free part} of $n$ (note that the cube-free part of a nonzero integer $n$ is always positive, regardless of the sign of $n$).

As with all works studying solutions to equations like \eqref{eq:main eqn of study}, we make use of elementary observations regarding factorizations of the terms $n+jd$.

\begin{lemma}\label{lem:factorization of terms}
Let $5 \leq k \leq 11$ and $0 \leq i \leq k-1$ be integers. Assume there are nonzero integers $y,n,d$ with $d\geq 1$ and $\textup{gcd}(n,d)=1$ such that \eqref{eq:main eqn of study} holds. Then for $0 \leq j \leq k-1, j \neq i$, we may uniquely write $n+jd=a_jx_j^3$, where:
\begin{itemize}
\item $a_j$ is a positive integer,
\item $a_j$ is cube-free, and divisible only by primes $\leq k-1$,
\item $\textup{gcd}(a_j,a_\ell) \mid (\ell-j)$ if $0 \leq j < \ell \leq k-1$ and $j,\ell \neq i$,
\item the integers $x_j$ are nonzero, and if $k \leq 8$ they are pairwise coprime,
\item $\prod_{\substack{j=0 \\ j \neq i}}^{k-1}a_j$ is a perfect cube.
\end{itemize}
\end{lemma}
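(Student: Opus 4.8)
The plan is to take $a_j$ and $x_j$ to be exactly the cube-free decomposition $n+jd = a_j x_j^3$ supplied by the notion of cube-free part introduced just before the lemma, and then to verify the five assertions using only two elementary inputs: the hypothesis $\gcd(n,d)=1$, which governs the primes shared by two distinct terms, and the hypothesis that the entire product equals $y^3$, which governs $p$-adic valuations globally. First I would note that $n+jd \neq 0$ for every $j \neq i$; otherwise the left side of \eqref{eq:main eqn of study} would vanish, forcing $y=0$ and contradicting $ny\neq 0$. Thus each $n+jd$ is a nonzero integer and has a well-defined cube-free part, so the representation $n+jd = a_jx_j^3$ exists and is unique with $a_j$ positive and cube-free and $x_j$ nonzero. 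This immediately delivers the claims that $a_j$ is a positive cube-free integer and that the $x_j$ are nonzero.

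The heart of the argument is a single local observation. Suppose a prime $p$ divides both $n+jd$ and $n+\ell d$ for some $j \neq \ell$ with $j,\ell \neq i$. Then $p \mid (n+\ell d)-(n+jd)=(\ell-j)d$. If $p \mid d$, then $p\mid n+jd$ would give $p \mid n$, contradicting $\gcd(n,d)=1$; hence $p \nmid d$ and therefore $p \mid \ell-j$. Running the same computation with prime powers shows, more precisely, that if $p^m \mid \gcd(n+jd,\, n+\ell d)$, then $p^m \mid \ell-j$.

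I would then read off the remaining three assertions. For the statement that $a_j$ is divisible only by primes $\leq k-1$: any prime $p>k-1$ exceeds $|\ell-j|$ for all admissible pairs, so by the local observation $p$ divides at most one term; if $p \mid n+jd$, then $v_p\big(\prod_{\substack{m=0\\m\neq i}}^{k-1}(n+md)\big)=v_p(n+jd)$, and since this product equals $y^3$ we get $v_p(n+jd)\equiv 0 \pmod 3$, whence $v_p(a_j)=0$. This is the one place where the global cube hypothesis, and not merely pairwise gcd data, is needed. For $\gcd(a_j,a_\ell)\mid \ell-j$: writing $m=\min(v_p(a_j),v_p(a_\ell))$ for each prime $p$, we have $p^m \mid \gcd(n+jd,n+\ell d)$, so $p^m \mid \ell-j$ by the local observation, and collecting over all $p$ gives the divisibility. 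For the product $\prod_{j\neq i} a_j$ being a cube: for every prime $p$, $0 \equiv v_p(y^3)=\sum_{j\neq i}v_p(n+jd)=\sum_{j\neq i}v_p(a_j)+3\sum_{j\neq i}v_p(x_j)\pmod 3$, so $\sum_{j\neq i}v_p(a_j)\equiv 0 \pmod 3$; as this holds for all $p$, the positive integer $\prod_{j\neq i}a_j$ is a perfect cube. Finally, for pairwise coprimality of the $x_j$ when $k\leq 8$: if a prime $p$ divided $\gcd(x_j,x_\ell)$ for some $j\neq\ell$, then $p^3\mid x_j^3\mid n+jd$ and likewise $p^3\mid n+\ell d$, so $p^3 \mid \ell-j$ by the local observation; but $0<|\ell-j|\leq k-1\leq 7<8\leq p^3$, a contradiction.

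I do not anticipate a genuine obstacle, since the lemma is elementary in nature. The two points demanding a little care are the bound on prime divisors of $a_j$, where one must upgrade local gcd information to a global conclusion by invoking that the product is a cube, and the coprimality claim, whose proof explains precisely why the hypothesis $k\leq 8$ appears: it is exactly the inequality $k-1\leq 7<8=2^3$ that rules out a common prime factor of two $x_j$, and this reasoning breaks down at $k=9$, where the extreme index gap $8$ is itself $2^3$.
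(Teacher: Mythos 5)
Your proposal is correct and follows essentially the same route as the paper: the key gcd observation that $\gcd(n+jd,n+\ell d)$ divides $(\ell-j)$ because $\gcd(n,d)=1$, the use of the cube hypothesis to force $3 \mid v_p(n+jd)$ at primes exceeding $k-1$ (hence $v_p(a_j)=0$), and the bound $p^3 \leq k-1$ forcing $k \geq 9$ to rule out a common prime factor of two $x_j$'s. The only difference is organizational --- you take the cube-free decomposition as the definition and verify its properties via $p$-adic valuations, whereas the paper first splits off the large primes as $e_j z_j^3$ and then absorbs cube factors --- but the resulting $a_j$ and all the verifications coincide.
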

\begin{proof}
Let $0 \leq j < \ell \leq k-1$ with $j,\ell \neq i$, and let $g = \text{gcd}(n+jd,n+\ell d)$. Then $g$ divides $(n+\ell d)-(n+jd) = (\ell-j)d$, and since $\text{gcd}(n,d)=1$ we find $g$ is coprime to $d$. Hence $g \mid (\ell - j)$. It follows that the greatest common divisor of $n+jd,n+\ell d$ is at most $k-1$. Therefore, if $p\geq k$ is a prime and $p \mid y$, then $p$ divides a unique term $n+jd$. 

We may therefore uniquely factor
\begin{align*}
n+jd = e_jz_j^3,
\end{align*}
where $e_j$ is only divisible by primes $\leq k-1$, and $z_j$ is only divisible by primes $\geq k$. We may assume $e_j$ is a positive integer by pulling a minus sign into $z_j^3$.  From the discussion above, we have $\text{gcd}(z_j,z_\ell) = 1$ if $j \neq \ell$. 

By pulling cube factors of $e_j$ into $z_j$, we may then write
\begin{align*}
n+jd = a_jx_j^3,
\end{align*}
where $a_j$ is positive, cube-free, and divisible only by primes $\leq k-1$. Note that $x_j \neq 0$ since $y\neq 0$. Note also that if there is some prime $p$ with $p \mid \text{gcd}(x_j,x_\ell)$, then $p^3 \mid \text{gcd}(n+jd,n+\ell d)$, so we must have $8\leq p^3 \leq k-1$, or $k \geq 9$.

Lastly, we note that
\begin{align*}
y^3 = \prod_{\substack{j=0 \\ j \neq i}}^{k-1} (n+jd) = \prod_{\substack{j=0 \\ j \neq i}}^{k-1} a_j\cdot \Big(\prod_{\substack{j=0 \\ j \neq i}}^{k-1} x_j \Big)^3,
\end{align*}
so the product of the $a_j$'s is a perfect cube.
\end{proof}

The factorizations described in Lemma \ref{lem:factorization of terms} are of fundamental importance. We pay special attention to the integers $a_j$.

\begin{definition}\label{defn: coeff vector}
Assume there are nonzero integers $y,n,d$ with $d\geq 1$ and $\textup{gcd}(n,d)=1$ such that \eqref{eq:main eqn of study} holds. Let $n+jd=a_jx_j^3$ be the factorization as described in Lemma \ref{lem:factorization of terms}. We call $(a_0,a_1,\ldots,a_{i-1},a_{i+1},\ldots,a_{k-1})$ the \emph{coefficient vector} corresponding to $y,n,d$.
\end{definition}

Solutions to \eqref{eq:main eqn of study} correspond to their coefficient vectors, and, given a potential coefficient vector, we can ask whether there are any corresponding solutions to \eqref{eq:main eqn of study}. We will sometimes abbreviate ``coefficient vector'' to just ``vector.''

Our general strategy is to determine the coefficient vectors that can correspond to solutions to \eqref{eq:main eqn of study}. Since the integers $a_j$ are cube-free and their largest prime factors are bounded, there are only a finite number of potential coefficient vectors to consider for given values of $k$ and $i$. Given a potential coefficient vector $\textbf{v} = (a_0,\ldots,a_{i-1},a_{i+1},\ldots,a_{k-1})$, we may try to show $\textbf{v}$ cannot correspond to solutions of \eqref{eq:main eqn of study} by reducing to various cubic equations. More particularly, given distinct integers $r,s,t$ with $0 \leq r,s,t \leq k-1$ and $r,s,t \neq i$, we have
\begin{equation}\label{eq:ternary cubic eqn}
\begin{split}
0 &= (s-t)(n+rd)+(t-r)(n+sd)+(r-s)(n+td) \\
&= (s-t)a_rx_r^3 + (t-r)a_sx_s^3 + (r-s)a_tx_t^3.
\end{split}
\end{equation}
If the cubic equation $(s-t)a_rx_r^3 + (t-r)a_sx_s^3 + (r-s)a_tx_t^3 = 0$ has no solutions in nonzero integers $x_r,x_s,x_t$, then $\textbf{v}$ cannot be a coefficient vector corresponding to any solutions to \eqref{eq:main eqn of study}.

In order for this strategy to be effective, of course, we need access to a large supply of cubic equations of the form $ax^3+by^3+cz^3=0$ that we know to be unsolvable. The following lemma, which is essentially a variant of \cite[Proposition 4.2]{Ben2023}, is our main tool.

\begin{lemma}\label{lem:selmer}
Let
\begin{align*}
\mathcal{L} = \{&1, 3, 4, 5, 10, 14, 18, 21, 25, 36, 45, 60, 100, 147, 150, \\
                &175, 196, 225, 245, 252, 300, 315, 350, 882, 980, \\
                &1050, 1470, 1575, 1764, 2940, 7350, 14700\}.
\end{align*}
Let $a,b,c$ be nonzero integers, and let $D$ denote the cube-free part of $abc$. If $D \in \mathcal{L}$, then the equation $ax^3+by^3+cz^3=0$ has no solutions in nonzero integers $x,y,z$.
\end{lemma}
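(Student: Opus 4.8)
The plan is to reduce the claim to a statement about rational points on genus-one curves and then to the ranks of their Jacobians, which are elliptic curves. For nonzero integers $a,b,c$, the equation $ax^3+by^3+cz^3=0$ has a nonzero integer solution if and only if it has a nontrivial rational point, and this property depends only on the cube-free part $D$ of the product $abc$. Indeed, scaling variables by cubes and absorbing cube factors shows that $ax^3+by^3+cz^3=0$ is equivalent (over $\mathbb{Q}$, up to the obvious projective change of variables) to the Selmer-type equation $X^3+Y^3+DZ^3=0$, where $D$ is the cube-free part of $abc$. So the first step is to record this reduction carefully: verify that the existence of a nontrivial solution is invariant under replacing $(a,b,c)$ by any triple with the same cube-free product, and conclude that it suffices to show $X^3+Y^3+DZ^3=0$ has no nontrivial rational points for each $D \in \mathcal{L}$.

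The second step is to attach to each $D$ the plane cubic $C_D \colon X^3+Y^3+DZ^3=0$ and its Jacobian elliptic curve. The standard fact (going back to the classical theory of the curves $x^3+y^3=m$) is that $C_D$ has Jacobian $E_D \colon v^2 = u^3 - 432 D^2$, and that nontrivial rational points on $C_D$ correspond to nontorsion structure in a way that forces $E_D$ to have positive rank whenever $C_D(\mathbb{Q}) \neq \emptyset$ in the nontrivial sense. Concretely, for each $D$ in the list I would compute the Mordell--Weil rank of $E_D$ and check it equals zero; when the rank is zero and the torsion contributes no point corresponding to a nontrivial solution of $C_D$, the curve $C_D$ has no nontrivial rational points, which is exactly the desired conclusion. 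This is the content of the cited \cite[Proposition 4.2]{Ben2023}, and the role of the explicit list $\mathcal{L}$ is that it enumerates precisely those cube-free $D$ (with the relevant bounded prime support) for which the rank computation returns zero.

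The practical heart of the proof is therefore a finite rank computation, carried out by machine. For each of the thirty-two values $D \in \mathcal{L}$ I would invoke the descent machinery in Magma (or Sage) on $E_D \colon v^2 = u^3 - 432 D^2$ to certify that the rank is zero; for rank-zero curves the analytic and algebraic ranks agree and the computation is unconditional, so no reliance on BSD is needed. I would then confirm that the torsion subgroup of $E_D(\mathbb{Q})$ gives rise only to the trivial (zero) solutions of $C_D$. Collecting these verifications over all $D$ in the list yields the lemma. The main obstacle, such as it is, is not conceptual but bookkeeping: one must (i) get the normalization of the cube-free reduction exactly right so that absorbing cube factors never silently changes the solvability, and (ii) ensure the rank-zero certification is rigorous for every single $D$, since a positive rank for even one value would leave open the possibility of a nontrivial solution. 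Because all thirty-two curves in $\mathcal{L}$ are known (or can be checked) to have rank zero, the computation closes cleanly, and the explicit list in the statement is exactly the output of this search restricted to the cube-free integers that can actually arise as products $abc$ in the ternary cubics \eqref{eq:ternary cubic eqn}.
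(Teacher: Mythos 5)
Your overall architecture (reduce to the diagonal form $X^3+Y^3+DZ^3=0$, pass to the Jacobian $E_D\colon v^2=u^3-432D^2$, certify rank zero machine-wise, and check torsion) is the paper's, but your first step has a genuine error. You assert that $ax^3+by^3+cz^3=0$ is ``equivalent (over $\mathbb{Q}$, up to the obvious projective change of variables)'' to $X^3+Y^3+DZ^3=0$ by scaling variables and absorbing cube factors. That is false: the two curves are torsors under the same Jacobian but are in general \emph{not} isomorphic over $\mathbb{Q}$. Selmer's own example $3x^3+4y^3+5z^3=0$ has no rational points at all, while $X^3+Y^3+60Z^3=0$ always has the flex $(1,-1,0)$; no rational change of variables can relate them, since having a rational point is invariant under such changes. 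What is true, and what the paper actually uses, is the one-directional statement: a solution of $ax^3+by^3+cz^3=0$ with $xyz\neq 0$ produces a solution of $X^3+Y^3+abcZ^3=0$ with $Z\neq 0$. This is Selmer's Theorem I together with his explicit formula (equation 1.2.4 of \cite{Sel1951}) --- a nontrivial polynomial identity, not cube-absorption --- after which cube factors of $abc$ (and its sign) are absorbed into $Z$ to reach the cube-free $D$. Your reduction happens to need only the true direction, but as justified it would not survive refereeing; you must cite or prove Selmer's transfer.

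Second, the value $D=1\in\mathcal{L}$ escapes your template of ``rank zero plus torsion contributing nothing.'' For $D=1$ the diagonal curve $X^3+Y^3+U^3=0$ genuinely has solutions with $U\neq 0$, namely $(1,0,-1)$ and $(0,1,-1)$, corresponding to torsion on $v^2=u^3-432$; the torsion-triviality fact invoked for the other values holds only for cube-free $D\geq 3$. So for $D=1$ the diagonal reduction cannot yield a contradiction, and your proposed torsion check would simply fail rather than close the case. The paper handles $D=1$ by a separate direct argument that never leaves the original equation: writing $abc=f^3$ and setting $F=-abxy/(f^2z^2)$ and $G=-\tfrac{1}{2}-ab(ax^3-by^3)/(2f^3z^3)$ gives a rational point on $v^2+v=u^3$, a rank-zero curve whose two non-identity points have $u=0$, contradicting $abcxyz\neq 0$. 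Some such special treatment of $D=1$ (retaining the nonvanishing information $xyz\neq 0$ from the original ternary form) is required; your rank-zero computations for the remaining thirty-one values $D\geq 3$ are otherwise in line with the paper's proof.
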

\begin{proof}
We may suppose that $a,b,c$ are all positive integers by pulling any minus signs into $x,y,z$. If $ax^3+by^3+cz^3=0$ has a solution in nonzero integers $x,y,z$, then, by \cite[Theorem I and equation 1.2.4]{Sel1951}, the equation $X^3+Y^3+abcZ^3=0$ has a solution in integers $X,Y,Z$ with $Z \neq 0$. Absorbing cube factors of $abc$ and its sign into $Z^3$, we see that $X^3+Y^3+DU^3=0$ has a solution in integers with $U \neq 0$.

We must have $X+Y \neq 0$ since $U \neq 0$. If we set
\begin{align*}
F = -12D \frac{U}{X+Y}, \ \ \ \ G = 36D \frac{X-Y}{X+Y},
\end{align*}
then $G^2=F^3-432D^2$. In other words, $(F,G)$ is a rational point on the elliptic curve
\begin{align*}
E_D : v^2=u^3-432D^2.
\end{align*}
If $D\geq 3$ and $D$ is cube-free, then the torsion subgroup of $E_D(\mathbb{Q})$ is trivial (see \cite[exercise 10.19]{AEC2009}). For those $D \in \mathcal{L}$ with $D \geq 3$, we find\footnote{See \text{Lemma-2d3-elliptic-curve-computations.sage} in our GitHub repository \cite{Pratt_cubes_2025}.} that $E_D$ has rank zero, and therefore $E_D(\mathbb{Q})$ is empty. It follows that the original equation $ax^3+by^3+cz^3=0$ has no solutions in nonzero integers $x,y,z$.

Suppose $D=1$, so that $abc$ is a cube. Write $abc = f^3$, with $f\geq 1$. If we let
\begin{align*}
F = -\frac{abxy}{f^2z^2}, \ \ \ \ G = -\frac{1}{2}-\frac{ab(ax^3-by^3)}{2f^3z^3},
\end{align*}
then we have $G^2+G = F^3$. Therefore $(F,G)$ is a rational point on the elliptic curve $v^2+v=u^3$. This elliptic curve has rank zero, and its two non-identity points have $u=0$. This contradicts the fact that $abcxyz \neq 0$.
\end{proof}

The next lemma is sometimes useful when a coefficient vector does give rise to solutions to \eqref{eq:main eqn of study}.

\begin{lemma}\label{lem:x3 + y3 + 2z3}
The only integer solutions to $x^3+y^3+2z^3=0$ with $xyz \neq 0$ and $x,y,z$ pairwise coprime are $(x,y,z) = \pm (1,1,-1)$.
\end{lemma}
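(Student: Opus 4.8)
The plan is to reduce to a single elliptic curve of rank zero, exactly as in the proof of Lemma~\ref{lem:selmer}, and then invert the construction. Suppose $(x,y,z)$ is a solution with $xyz \neq 0$ and $x,y,z$ pairwise coprime. First I would observe that $x+y \neq 0$: otherwise $x=-y$ gives $x^3+y^3=0$, forcing $z=0$, contrary to $xyz \neq 0$. This is the only degenerate case, so the rational quantities appearing below are always well defined.

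With $x+y \neq 0$ in hand, the construction in the proof of Lemma~\ref{lem:selmer} applies directly with $a=b=1$, $c=2$, so that $abc=2$ is already cube-free and $D=2$. Setting
\[
F = -24\,\frac{z}{x+y}, \qquad G = 72\,\frac{x-y}{x+y},
\]
the identity recorded there gives $G^2 = F^3 - 432\cdot 2^2 = F^3 - 1728$, so that $(F,G)$ is a rational point on the elliptic curve $E_2 : v^2 = u^3 - 1728$. Thus every solution of the desired type produces a rational point on $E_2$.

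The key step is to determine $E_2(\mathbb{Q})$ completely. Here $E_2$ falls outside the scope of Lemma~\ref{lem:selmer} precisely because it has nontrivial torsion: since $12^3 = 1728$, the point $(12,0)$ lies on $E_2$ and has order $2$. I would verify by computation (in the style of the footnote to Lemma~\ref{lem:selmer}) that $E_2$ has rank zero, and check that its torsion subgroup is exactly $\{O,(12,0)\} \cong \mathbb{Z}/2\mathbb{Z}$; the absence of further torsion can be seen directly, as the two-torsion is cut out by $u^3 - 1728 = 0$ (whose only rational root is $u=12$) and the three-division polynomial $3u(u^3-6912)$ has no rational root yielding a real point. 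Hence $E_2(\mathbb{Q}) = \{O,(12,0)\}$.

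Finally I would invert the map. Solving the defining relations for the ratio $x:y:z$ yields $x:y:z = (72+G):(72-G):(-6F)$. Since our solution has $x+y \neq 0$, its image $(F,G)$ is an affine point, hence equals $(12,0)$, giving $x:y:z = 72:72:-72 = 1:1:-1$. Pairwise coprimality then forces $(x,y,z) = \pm(1,1,-1)$, as claimed. The main obstacle is the rank computation for $E_2$: everything else is a direct translation of the argument already used for Lemma~\ref{lem:selmer}, but the conclusion that $E_2(\mathbb{Q})$ has rank zero (equivalently, the classical fact that $x^3+y^3=2w^3$ has no nontrivial primitive solutions beyond $x=y=w$) is what makes the finite enumeration possible.
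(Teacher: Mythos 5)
Your proposal is correct and takes essentially the same route as the paper: the paper applies the Selmer-style substitution $F = -6z/(x+y)$, $G = 9(x-y)/(x+y)$ to land on the rank-zero curve $v^2 = u^3 - 27$, whose unique non-identity rational point $(3,0)$ forces $x=y$, $z=-x$; your curve $E_2 : v^2 = u^3 - 1728$ is the same curve under the isomorphism $(u,v) \mapsto (4u, 8v)$, with $(12,0)$ corresponding to $(3,0)$. The only cosmetic caveat is that your division-polynomial remark alone rules out only extra $2$- and $3$-torsion (not, say, a point of order $4$ doubling to $(12,0)$), but this is moot since you also verify the full torsion subgroup computationally, exactly as the paper does via its Sage footnote.
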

\begin{proof}
Suppose we have nonzero, pairwise coprime integers $x,y,z$ such that $x^3+y^3+2z^3=0$. We must have $x+y \neq 0$, otherwise $z=0$. We set
\begin{align*}
F = -6\frac{z}{x+y}, \ \ \ \ G = 9\frac{x-y}{x+y},
\end{align*}
to find $G^2=F^3-27$. Hence $(F,G)$ is a rational point on the elliptic curve $v^2=u^3-27$. This elliptic curve has rank zero\footnote{See \text{Lemma-2d4-elliptic-curve-computation.sage} in our GitHub repository \cite{Pratt_cubes_2025}.}, and its one non-identity point is $(u,v)=(3,0)$. Comparing this with our change of variables, we see that $x=y$ and $x=-z$. Since $x,y,z$ are pairwise coprime, we must have $(x,y,z) = \pm (1,1,-1)$, as desired.
\end{proof}

The following result is helpful for eliminating some vectors not handled by Lemmas \ref{lem:selmer} or \ref{lem:x3 + y3 + 2z3}. It implies we cannot have three consecutive coefficients $a_j$ in Lemma \ref{lem:factorization of terms} all equal to one.

\begin{lemma}\label{lem:three consec aj equal one}
Let $0 \leq j \leq k-1$ such that $j,j+1,j+2 \neq i$. Then at least one of $a_j,a_{j+1},a_{j+2}$  is not equal to one.
\end{lemma}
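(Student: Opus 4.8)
The plan is to argue by contradiction. Suppose to the contrary that $a_j = a_{j+1} = a_{j+2} = 1$. By the factorization in Lemma~\ref{lem:factorization of terms}, this means the three consecutive terms are themselves perfect cubes, say $n+jd = x_j^3$, $n+(j+1)d = x_{j+1}^3$, and $n+(j+2)d = x_{j+2}^3$. Since these three values lie in an arithmetic progression with common difference $d$, subtracting the outer two from twice the middle one yields $x_j^3 + x_{j+2}^3 = 2x_{j+1}^3$, i.e. $x_j^3 + x_{j+2}^3 + 2(-x_{j+1})^3 = 0$. The goal is then to apply Lemma~\ref{lem:x3 + y3 + 2z3} to this equation.

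To invoke Lemma~\ref{lem:x3 + y3 + 2z3}, I first need the integers $x_j, x_{j+1}, x_{j+2}$ to be pairwise coprime, and this is the one point that requires a little care: Lemma~\ref{lem:factorization of terms} guarantees pairwise coprimality of \emph{all} the $x_j$ only when $k \leq 8$. However, for three \emph{consecutive} indices the pairwise differences are just $1$, $1$, and $2$. Recalling from the proof of Lemma~\ref{lem:factorization of terms} that any common prime factor $p$ of $x_j$ and $x_\ell$ must satisfy $p^3 \mid \gcd(n+jd, n+\ell d) \mid (\ell - j)$, here we have $\ell - j \leq 2 < 8 = 2^3$, so no such prime $p$ can exist. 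Hence $x_j, x_{j+1}, x_{j+2}$ are pairwise coprime regardless of the value of $k$.

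With coprimality established, Lemma~\ref{lem:x3 + y3 + 2z3} forces $(x_j, x_{j+2}, -x_{j+1}) = \pm(1,1,-1)$, and in either case $x_j = x_{j+1} = x_{j+2}$. But then $n + jd = n + (j+1)d$, which forces $d = 0$, contradicting the hypothesis $d \geq 1$. This contradiction shows that at least one of $a_j, a_{j+1}, a_{j+2}$ must differ from one.

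The main obstacle — indeed essentially the only subtle point — is securing the pairwise coprimality input demanded by Lemma~\ref{lem:x3 + y3 + 2z3}; once that is in place, the argument is an immediate reduction to the statement that an arithmetic progression of three pairwise coprime nonzero cubes cannot exist. The key saving observation is that restricting to \emph{consecutive} indices keeps all relevant differences strictly below $8$, so the $p^3$-divisibility bound rules out shared prime factors even in the remaining cases $k \in \{9,10,11\}$ not covered by the coprimality clause of Lemma~\ref{lem:factorization of terms}.
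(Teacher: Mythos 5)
Your proof is correct and follows essentially the same route as the paper: the identity $x_j^3 - 2x_{j+1}^3 + x_{j+2}^3 = 0$, an appeal to Lemma~\ref{lem:x3 + y3 + 2z3}, and the conclusion $d=0$. Your careful handling of pairwise coprimality via the $p^3 \mid \gcd(n+jd, n+\ell d) \mid (\ell - j)$ bound is exactly the observation the paper compresses into one sentence (the gcd of any two of the three consecutive terms is at most two), so the extra care for $k \in \{9,10,11\}$ is well placed but not a departure.
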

\begin{proof}
Assume by way of contradiction that $a_j = a_{j+1}=a_{j+2} = 1$. Then $n+jd=x_j^3, n+(j+1)d=x_{j+1}^3, n+(j+2)d=x_{j+2}^3$. Observe that $x_j,x_{j+1},x_{j+2}$ are pairwise coprime since the greatest common divisor of any two of $n+jd,n+(j+1)d,n+(j+2)d$ is at most two. We observe that
\begin{align*}
0 &= n+jd-2(n+(j+1)d) + n+(j+2)d = x_j^3 - 2x_{j+1}^3 + x_{j+2}^3,
\end{align*}
so by Lemma \ref{lem:x3 + y3 + 2z3} we have $x_j = x_{j+1}=x_{j+2}=\epsilon$ for some $\epsilon \in \{-1,1\}$. It follows that $n+jd = n+(j+1)d = \epsilon$, and therefore $d=0$. This contradicts our assumption that $d$ is positive.
\end{proof}

Our last lemma assists in the cases $i=0$ or $i=k-1$. The result is due to Hajdu, Tengely, and Tijdeman \cite[Theorem 2.1]{HTT2009}

\begin{lemma}\label{lem:HTT}
Suppose we have a solution to the equation
\begin{align*}
\prod_{j=0}^{k-1}(n+jd) = y^3
\end{align*}
in integers $k,n,d,y$ with $4 \leq k \leq 38, d \geq 1, ny \neq 0$, and $\textup{gcd}(n,d)=1$. Then $(k,n,d) = (4,-9,5)$ or $(4,-6,5)$.
\end{lemma}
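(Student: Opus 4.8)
The plan is to run the same machine the paper develops for the missing-term case, now on the full product with all $k$ terms present. First I would establish the full-product analog of Lemma \ref{lem:factorization of terms}: for $0 \le j \le k-1$ write $n+jd = a_j x_j^3$ with $a_j$ positive, cube-free and supported on primes $\le k-1$, with $\gcd(a_j,a_\ell) \mid \ell - j$, with the $x_j$ nonzero (pairwise coprime when $k \le 8$), and with $\prod_{j=0}^{k-1} a_j$ a perfect cube. The argument is identical to that of Lemma \ref{lem:factorization of terms}, which never used the omission of a term. For each fixed $k$ this produces a finite list of candidate coefficient vectors $(a_0,\ldots,a_{k-1})$, namely the cube-free, small-prime tuples whose product is a cube and which satisfy the gcd constraints.

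Next, for each candidate vector I would attempt a contradiction by extracting ternary cubics. For a triple $0 \le r < s < t \le k-1$, relation \eqref{eq:ternary cubic eqn} gives $(s-t)a_r x_r^3 + (t-r)a_s x_s^3 + (r-s)a_t x_t^3 = 0$; if the cube-free part of the product of its three coefficients lies in the list $\mathcal{L}$ of Lemma \ref{lem:selmer}, the equation has no nonzero solution and the vector is eliminated. I would also invoke the full-product analog of Lemma \ref{lem:three consec aj equal one}, whose proof is unchanged: no three consecutive $a_j$ can all equal one, since $x_j^3 - 2x_{j+1}^3 + x_{j+2}^3 = 0$ would force $d = 0$ by Lemma \ref{lem:x3 + y3 + 2z3}. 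Applied over all triples and all windows of three consecutive indices, these two tools should eliminate the overwhelming majority of candidate vectors.

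For large $k$ a counting argument is meant to reinforce this. Writing $N$ for the number of indices with $a_j \ne 1$, each such $j$ has some prime $p \le k-1$ with $p \nmid d$ dividing $n+jd$, and for each such $p$ only the single residue class $j \equiv j_0 \pmod{p}$ contributes, so $N$ is controlled by $\sum_{p \le k-1} \lceil k/p \rceil$; one then wants to argue that when $k$ is large the indices with $a_j = 1$ cannot avoid three consecutive ones, contradicting the analog of Lemma \ref{lem:three consec aj equal one}. The delicate point is that terms divisible by $2$ and $3$ are far too numerous for the crudest union bound to suffice, so the counting must be sharpened: tracking, for each small prime, how the divisibility pattern along the progression interacts with the three-in-a-row requirement, and exploiting that the $a_j$ are nearly distinct, since $a_j = a_\ell$ forces $(j-\ell)d = a_j(x_j^3 - x_\ell^3)$, which is tightly constrained. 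I expect this sharpened counting to be the main obstacle.

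Finally, the candidate vectors surviving Lemma \ref{lem:selmer} — including the genuine $k=4$ vector $(a_0,a_1,a_2,a_3) = (9,4,1,6)$ and its reverse $(6,1,4,9)$, coming from the terms $-9,-4,1,6$ and $-6,-1,4,9$ — yield ternary cubics whose coefficient product has cube-free part outside $\mathcal{L}$. These correspond to elliptic curves of positive rank or to Thue equations and must be resolved individually, by solving the Thue equations and determining the finitely many relevant points on the associated curves (over $\mathbb{Q}$, or over a cubic field via elliptic curve Chabauty as in the $k=7,i=3$ analysis of Section \ref{sec:finishing proof of main thm}). Checking that the only surviving solutions are the two at $k=4$, uniformly for all $4 \le k \le 38$, is the heavy computational core, and the ceiling $k \le 38$ should reflect the feasibility of these computations rather than any theoretical barrier.
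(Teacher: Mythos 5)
You cannot be faulted for not matching the paper's proof here, because the paper has none: Lemma \ref{lem:HTT} is quoted verbatim from Hajdu, Tengely, and Tijdeman \cite[Theorem 2.1]{HTT2009}, and the paper's ``proof'' is that citation. Your proposal is therefore an attempt to reprove an external theorem, and while it is methodologically in the right spirit (coefficient vectors, ternary cubics, the three-consecutive-ones lemma; this is indeed close to how HTT and the present paper operate), and while your sanity checks are correct --- the two genuine solutions $(4,-9,5)$ and $(4,-6,5)$ do have coefficient vectors $(9,4,1,6)$ and $(6,1,4,9)$ with cube product $216$ --- what you have written is a program with acknowledged holes, not a proof.

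The gaps are concrete. First, Lemma \ref{lem:selmer} cannot carry the load you assign it: every $D \in \mathcal{L}$ is $7$-smooth, whereas for $12 \leq k \leq 38$ the coefficients $a_j$ are supported on primes up to $37$ and the multipliers $s-t$, $t-r$, $r-s$ can reach $37$, so the cube-free part of $(s-t)(t-r)(r-s)a_ra_sa_t$ will generically involve primes $11,13,\ldots,37$ and lie outside $\mathcal{L}$. You would need unsolvability results (rank-zero computations or local arguments) for an enormously larger family of curves $E_D$ --- this is precisely the machinery HTT assemble, and you have not supplied it. Second, your large-$k$ counting step is explicitly left unresolved (``the main obstacle''), and without it you are thrown back on brute enumeration of coefficient vectors, which is infeasible in this range: the paper reports roughly fourteen million incomplete vectors already at $k=11$ with one term missing, and at $k=38$ each $a_j$ ranges over cube-free $37$-smooth integers, an astronomically larger space; your remark that the ceiling $k \leq 38$ ``should reflect feasibility'' concedes rather than closes this. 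Third, pairwise coprimality of the $x_j$, which your elementary eliminations lean on, holds only for $k \leq 8$ by Lemma \ref{lem:factorization of terms}, so those arguments need repair for $k \geq 9$ (the three-consecutive-ones lemma survives, since consecutive terms have gcd at most $2$, but general coprimality arguments do not). As written, your argument could plausibly be completed for small $k$, but for most of the stated range $4 \leq k \leq 38$ it is a research plan whose hardest steps are exactly the ones deferred.
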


The strategy is now clear. Given a potential coefficient vector $\textbf{v}$, go through all ${{k-1} \choose 3}$ cubic equations arising from choices of $r,s,t$ in \eqref{eq:ternary cubic eqn}. For each cubic equation $ax^3+by^3+cz^3=0$, we check whether an application of Lemma \ref{lem:selmer} gives rise to a contradiction. If so, then $\textbf{v}$ cannot be a coefficient vector corresponding to a solution of \eqref{eq:main eqn of study}. In practice, there are only a small number of remaining coefficient vectors to consider. We can then attempt to analyze these remaining vectors by a mix of Lemma \ref{lem:x3 + y3 + 2z3}, Lemma \ref{lem:three consec aj equal one}, and elementary arguments.

The number of possible coefficient vectors is finite for any fixed $k$ and $i$, but there are many different possible vectors. Analyzing the different possibilities by hand leads to many different cases (see, for instance, the arguments in \cite[Section 8]{DLSS2023}). In order to avoid manual case-by-case analysis, we rely instead on computer calculation. We build a list of possible coefficient vectors one entry at a time. That is, for fixed $k$ and $i$, we take incomplete coefficient vectors $(a_0,\ldots,a_{i-1},a_{i+1},\ldots,a_{j-1})$, and examine all the possible ways to extend to vectors $(a_0,\ldots,a_{i-1},a_{i+1},\ldots,a_{j-1},a_j)$. The entries in the coefficient vectors must be compatible with the conditions $\text{gcd}(n+jd,n+\ell d) \mid (\ell-j)$. We follow this process repeatedly to build up the set of possible incomplete coefficient vectors. Further, there is a unique element by which to extend an incomplete vector to its last element, since the product of the $a_j$ in a coefficient vector must be a perfect cube by Lemma \ref{lem:factorization of terms}.

When $k$ is small, there are not many choices for incomplete coefficient vectors, and the calculations finish quickly. As $k$ increases, the number of choices for incomplete coefficient vectors grows exponentially. For instance, when $k=11,i=4$, we must consider nearly fourteen million incomplete vectors. However, we are aided significantly by the (empirically-observed) fact that most almost-complete coefficient vectors cannot be extended to a full coefficient vector in a way that is consistent with various GCD constraints. For instance, when considering the case $k=11, i=4$, there are only about four hundred thousand complete coefficient vectors $(a_0,a_1,a_2,a_3,a_5,\ldots,a_{10})$ that satisfy all the necessary GCD conditions. Almost all of these vectors can then be handled by searching through ternary equations and using Lemma \ref{lem:selmer}.

\section{Proving ``most'' of Theorem \ref{thm:main thm}}\label{sec:proving most of main thm}

In this section, we prove ``most'' of Theorem \ref{thm:main thm}. It will transpire that the results in the previous section allow us to solve \eqref{eq:main eqn of study} fairly easily in all cases, except the case $k=7$ and $i=3$. We then study the case $k=7,i=3$ in the last section of the paper.

We consider several cases, depending on the values of $k$ and $i$\footnote{Sage code to verify the computations in this section is provided in \text{Section-3-computations.sage} in our GitHub repository \cite{Pratt_cubes_2025}. Information for particular values of $k$ and $i$ can be found in the ``output'' folder of the repository.}.

\subsection{$k=5$, and $i=0$ or $i=4$}

If $k=5$ and $i=0$, then we wish to solve the equation $(n+d)(n+2d)(n+3d)(n+4d)=y^3$ with $ny \neq 0, d \geq 1$, and $\text{gcd}(n,d)=1$. We set $m=n+d$ to obtain $m(m+d)(m+2d)(m+3d) = y^3$, where $m \neq 0$ and $\text{gcd}(m,d)=1$. By Lemma \ref{lem:HTT}, the only solutions to this equation have $d=5$ and $m \in \{-9,-6\}$. This solves \eqref{eq:main eqn of study} in the case $k=5,i=0$. The case with $k=5$ and $i=4$ is similar.

\subsection{$k=5, i=1$}

There are only three possible coefficient vectors not eliminated by searching through ternary equations and using Lemma \ref{lem:selmer}:
\begin{align*}
(a_0,a_2,a_3,a_4) = (1,1,1,1),(6,1,9,4),(1,2,1,4).
\end{align*}
The vector $(1,1,1,1)$ can be eliminated by using Lemma \ref{lem:three consec aj equal one} with $j=2$.

The vector $(6,1,9,4)$ can be eliminated by an elementary argument, as follows. The equations corresponding to this coefficient vector are
\begin{align*}
n &= 6x_0^3, \ \ \ \ n+2d = x_2^3, \ \ \ \ n+3d = 9x_3^3, \ \ \ \ n+4d = 4x_4^3.
\end{align*}
By Lemma \ref{lem:factorization of terms}, the integers $x_j$ are pairwise coprime. However, observe that $4 \mid n$, from the last equation, so that $2 \mid x_0$, by the first equation. The second equation implies $2 \mid x_2$, and therefore $2 \mid \text{gcd}(x_0,x_2)$, but this is a contradiction.

The vector $(1,2,1,4)$ gives rise to a solution. We choose $r=0,s=2,t=4$ in \eqref{eq:ternary cubic eqn} to obtain $x_0^3-4x_2^3+4x_4^3 = 0$, where $x_0,x_2,x_4$ are nonzero and pairwise coprime. This equation clearly implies $x_0$ is even, so write $x_0 = 2y_0$. Substituting and dividing through, we then have $2y_0^3+(-x_2)^3+x_4^3=0$. Lemma \ref{lem:x3 + y3 + 2z3} implies $(x_0,x_2,x_4) = \pm (2,1,-1)$. For some $\epsilon \in \{-1,1\}$, we then have $n=(2\epsilon)^3=8\epsilon$ and $n+2d = 2(\epsilon)^3 = 2\epsilon$. It follows that $2d = -6\epsilon$, and since $d$ is positive we must have $\epsilon = -1$. It follows that $n=-8$ and $d=3$, as in \eqref{eq:solns for main thm}.

\subsection{$k=5,i=2$}

The only possible coefficient vectors not eliminated by reducing to ternary equations and using Lemma \ref{lem:selmer} are
\begin{align*}
(a_0,a_1,a_3,a_4) = (1,1,1,1),(1,36,2,3),(3,2,36,1).
\end{align*}
We eliminate each of these vectors, following the argument of Bennett \cite[p. 219]{Ben2023}. We use the fact that, for any real $x$, we have
\begin{align}\label{eq:bennett special identity}
(x+1)^2(x+4)-x(x+3)^2 = 4.
\end{align}

Suppose $(a_0,a_1,a_3,a_4) = (1,1,1,1)$, and let $x = n/d$ in \eqref{eq:bennett special identity}. Multiplying through by $d^3$, we obtain
\begin{align*}
4d^3 = (n+d)^2(n+4d) - n(n+3d)^2 = x_1^6x_4^3 - x_0^3x_3^6,
\end{align*}
so $(x_1^2x_4)^3+(-x_0x_3^2)^3 + 4(-d)^3 = 0$. The equation $X^3+Y^3+4Z^3 = 0$ does not have solutions with $Z \neq 0$ by Lemma \ref{lem:selmer}, so $(1,1,1,1)$ cannot correspond to a solution.

Now assume $(a_0,a_1,a_3,a_4) = (1,36,2,3)$. Arguing as above, we arrive at the equation $4d^3 = 36^2 \cdot 3 \cdot x_1^6 x_4^3 - 2^2 \cdot x_0^3x_3^6$. We set $X = -d,Y=-x_0x_3^2$, and $Z=3x_1^2x_4$ to see that $4X^3+4Y^3+144Z^3=0$. Dividing through by $4$ gives $X^3+Y^3+36Z^3=0$, but this equation has no solutions in integers with $Z \neq 0$, by the proof of Lemma \ref{lem:selmer}. The last vector $(3,2,36,1)$ is eliminated similarly.

\subsection{$k=5,i=3$}

This case is similar to the case $k=5,i=1$. The three vectors to consider are: $(1,1,1,1),(4,9,1,6),(4,1,2,1)$. The first one is eliminated by Lemma \ref{lem:three consec aj equal one}. The second is eliminated by an elementary argument showing $2 \mid \text{gcd}(x_2,x_4)$. The third gives rise to a solution to \eqref{eq:main eqn of study} with $n=-4,d=3$; we handle this case with Lemma \ref{lem:x3 + y3 + 2z3}.

\subsection{Some preparatory notes for $6 \leq k \leq 11$}

We can take advantage of symmetries (already implicit above) and prior work on \eqref{eq:generalized ErdSelf eqn} to reduce the number of cases for larger values of $k$.

We first note that there are no solutions with $i=0$ or $i=k-1$. If we suppose we have a solution in this situation, then, possibly after changing variables, we have a solution to the equation
\begin{align*}
m(m+d)\cdots (m+(k'-1)d) = y^3
\end{align*}
with $my \neq 0, \text{gcd}(m,d)=1$, and $5 \leq k' \leq 10$. This equation has no solutions by Lemma \ref{lem:HTT}.

Hence, we may assume $1 \leq i \leq k-2$. Suppose we have a solution to \eqref{eq:main eqn of study}. If we set $m=n+(k-1)d$, then
\begin{align*}
y^3 &= (-1)^{k-1}\prod_{\substack{j=0 \\ j \neq i}}^{k-1}(-m+(k-1-j)d) = (-1)^{k-1} \prod_{\substack{\ell = 0 \\ \ell \neq k-1-i}}^{k-1} (-m+\ell d).
\end{align*}
If we absorb the sign $(-1)^{k-1}$ into $y^3$, then this is an equation of the form \eqref{eq:main eqn of study} with $i$ replaced by $k-1-i$. In fact, this yields a bijective involution between different solutions to \eqref{eq:main eqn of study} given by
\begin{align}\label{eq:flipping bijection}
(k,i,n,d) \leftrightarrow (k,k-1-i,-n-(k-1)d,d).
\end{align}
It therefore suffices to consider $1 \leq i \leq \frac{k-1}{2}$ in what follows.

\subsection{$k=6$}

When $i=1$, the only remaining vectors are 
\begin{align*}
(a_0,a_2,a_3,a_4,a_5)=(1,1,1,1,1),(50, 36, 1, 1, 15).
\end{align*}
The first can be eliminated by Lemma \ref{lem:three consec aj equal one}. For the second coefficient vector, note that we have $n=50x_0^3$ and $n+4d = x_4^3$. Since $n$ is even, we see that $n+4d$ is even, so $x_4$ is even, and $x_4^3$ is divisible by eight. It follows that $4 \mid n$. Since $n=50x_0^3$, we must have $2 \mid x_0$. Hence $2 \mid \text{gcd}(x_0,x_4)$, and this contradicts Lemma \ref{lem:factorization of terms}.

When $i=2$, the only remaining vector is $(a_0,a_1,a_3,a_4,a_5)=(1,1,1,1,1)$, and again we use Lemma \ref{lem:three consec aj equal one}.

\subsection{$k=7$}

For $1\leq i \leq 3$, the vector $(1,1,1,1,1,1)$ remains, but it can be eliminated by Lemma \ref{lem:three consec aj equal one}. This is the only vector for $i=1$ or $i=2$. For $i=3$, however, we have two other vectors
\begin{align}\label{eq:remaining vectors for k7 i3}
(a_0,a_1,a_2,a_4,a_5,a_6) = (4, 25, 18, 4, 3, 10), (10, 3, 4, 18, 25, 4)
\end{align}
that are not eliminated by Lemma \ref{lem:selmer}. Handling these coefficient vectors requires additional effort, and we postpone their treatment until Section \ref{sec:finishing proof of main thm}.

\subsection{$8 \leq k \leq 11$}

For each $k$ and $1 \leq i \leq \frac{k-1}{2}$, the only possible coefficient vector not eliminated by searching through ternary equations and using Lemma \ref{lem:selmer} is the coefficient vector with every $a_j=1$. In every case, this can be eliminated by using Lemma \ref{lem:three consec aj equal one}.

\section{The proof of Theorem \ref{thm:main thm}}\label{sec:finishing proof of main thm}

In this section we finish the proof of Theorem \ref{thm:main thm} by handling the case $k=7,i=3$. If we attempt to follow the line of argument in the previous section, we encounter ternary equations whose associated elliptic curves all have positive rank. Thus, we must follow a slightly more sophisticated path.

\begin{proposition}\label{prop:only solns to pair of cubic eqns}
The only solutions $(x,y,z,w) \in \mathbb{Z}^4$ to the pair of equations
\begin{align*}
x^3+y^3&=9z^3 \\
5x^3-y^3&=3w^3
\end{align*}
with $xyzw \neq 0$ and $x,y$ coprime are $(x,y,z,w) = \pm (1,2,1,-1)$.
\end{proposition}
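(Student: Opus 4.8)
The strategy is to combine the two cubic relations so that any integer solution produces a rational point on an elliptic curve over the pure cubic field $K=\mathbb{Q}(\sqrt[3]{5})$, and then to determine all such points using elliptic curve Chabauty, exactly as the paper's outline advertises (Lemma \ref{lem:EC over K}). First I would treat the system not as two independent ternary cubics but as a single object: the intersection of two cubic surfaces in $\mathbb{P}^3$, which is a curve of genus $> 1$, so that Chabauty-type methods are in principle applicable. The coprimality hypothesis on $x,y$ together with the equations forces constraints on $z,w$ (for instance, reducing modulo small primes and tracking $2$-adic and $3$-adic valuations of each term), and I would first nail down these local conditions to control common factors.

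**Passing to the cubic field.**
The key algebraic step is to factor over $K$. Writing $\theta=\sqrt[3]{5}$, the second equation $5x^3-y^3=3w^3$ can be read as a norm-type relation: $5x^3-y^3 = -(y-\theta x)(y^2+\theta x y + \theta^2 x^2)$ up to signs, since $y^3-5x^3 = N_{K/\mathbb{Q}}(y-\theta x)$. Thus $y-\theta x$ is, up to units and a controlled set of bad primes, a cube times a fixed element in the ring of integers $\mathcal{O}_K$. I would compute the class group and unit group of $K$ (both are small: $K$ has class number $1$ and unit rank $1$), enumerate the finitely many possible values of the coset representative, and thereby write $y-\theta x = \mu\,\xi^3$ for $\xi\in K$ and $\mu$ ranging over an explicit finite set. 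Substituting this back, and using the first equation $x^3+y^3=9z^3$ as a second constraint, should yield an equation in $\xi$ defining a genus-one curve over $K$, i.e. (after locating an obvious $K$-point coming from the known solution $(1,2,1,-1)$) an elliptic curve $E/K$.

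**Elliptic curve Chabauty.**
Once the system is reduced to finding $K$-rational points on $E$ whose coordinates satisfy an additional $\mathbb{Q}$-rationality condition (namely that $x,y,z,w$ lie in $\mathbb{Q}$, equivalently that a certain coordinate map lands in $\mathbb{P}^1(\mathbb{Q})\subset\mathbb{P}^1(K)$), I would apply elliptic curve Chabauty. This requires computing the Mordell--Weil group $E(K)$ — in particular verifying that its rank is strictly less than $[K:\mathbb{Q}]=3$ so the method applies — and then running the Chabauty routine in Magma against the $\mathbb{Q}$-rationality map to cut out the finitely many admissible points. Matching these against the change of variables recovers the solutions, and I expect only $\pm(1,2,1,-1)$ to survive.

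**Main obstacle.**
The principal difficulty is the algebraic reduction to a single elliptic curve over $K$ with rank low enough for Chabauty: correctly bookkeeping the units and the primes dividing $\gcd$'s (those above $2,3,5$) so that the ``$y-\theta x$ is a cube up to $\mu$'' step is exhaustive, and ensuring the resulting curve genuinely has Mordell--Weil rank $<3$. If the rank were too large one would need a covering collection of curves or an auxiliary descent before Chabauty becomes effective, so the delicate part is arranging the descent in the cubic field to produce a tractable $E(K)$. The rest — the final Chabauty computation and back-substitution — is routine given a successful setup.
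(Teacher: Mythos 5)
Your proposal is correct and takes essentially the same route as the paper: you factor $5x^3-y^3$ over $K=\mathbb{Q}(\sqrt[3]{5})$ (using class number one and unit rank one) so that $\alpha x - y$ is a cube up to a unit and controlled bad primes, combine this with the factorization $x^2-xy+y^2=3n^3$ extracted from $x^3+y^3=9z^3$ to land on a genus-one curve over $K$ containing the known point, and finish with elliptic curve Chabauty (rank $2<3=[K:\mathbb{Q}]$) against the $\mathbb{Q}$-rationality map, which is exactly the paper's Lemma \ref{lem:EC over K}. The bookkeeping you flag as the main obstacle is precisely where the paper's work lies: it shows the only common prime is $\mathfrak{p}_3=2-\alpha$, dividing $\alpha x - y$ exactly once, and eliminates the unit classes $\varepsilon_0,\varepsilon_0^2$ by reducing explicit cubic forms modulo $3$ (using $3\nmid xy$), leaving $\alpha x - y=(2-\alpha)v^3$ as you anticipated.
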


\begin{proof}[Proof of Theorem \ref{thm:main thm} assuming Proposition \ref{prop:only solns to pair of cubic eqns}]

The work of Section \ref{sec:proving most of main thm} proves Theorem \ref{thm:main thm} in all cases except the case with $k=7$ and $i=3$. In this case, by \eqref{eq:remaining vectors for k7 i3}, we know that the coefficient vector of any potential solution must satisfy
\begin{align*}
(a_0,a_1,a_2,a_4,a_5,a_6) = (4,25,18,4,3,10) \text{ or } (10,3,4,18,25,4).
\end{align*}
Equation \eqref{eq:flipping bijection} gives a bijection between solutions with coefficient vector equal to $(4,25,18,4,3,10)$ and solutions with coefficient vector equal to $(10,3,4,18,25,4)$. Since the bijection does not change $d$, it suffices to consider
\begin{align*}
(a_0,a_1,a_2,a_4,a_5,a_6) = (10,3,4,18,25,4).
\end{align*}

We apply \eqref{eq:ternary cubic eqn} with $(r,s,t)=(2,6,4)$ and $(2,6,1)$. Taking $(r,s,t) = (2,6,4)$ gives
\begin{align*}
0 &= (6-4)a_2x_2^3 + (4-2)a_6x_6^3+(2-6)a_4x_5^3 = 8x_2^3+8x_6^3-72x_4^3.
\end{align*}
Dividing through by $8$ and rearranging gives $x_2^3+x_6^3=9x_4^3$. Similarly, taking $(r,s,t) = (2,6,1)$ yields the equation $5x_2^3-x_6^3=3x_1^3$. Recall that the integers $x_j$ are pairwise coprime by Lemma \ref{lem:factorization of terms}. Therefore, $(x,y,z,w)=(x_2,x_6,x_4,x_1)$ is a solution to the pair of equations
\begin{align*}
x^3+y^3=9z^3, \ \ \ \ \ \ 5x^3-y^3=3w^3
\end{align*}
with $xyzw \neq 0$ and $x,y$ coprime. By Proposition \ref{prop:only solns to pair of cubic eqns}, it follows that $(x_2,x_6,x_4,x_1) = \pm (1,2,1,-1)$. We take $x_2 = \epsilon, x_6 = 2\epsilon$, where $\epsilon \in \{-1,1\}$. Then
\begin{align*}
n+2d &= 4\epsilon^3 = 4\epsilon, \\
n+6d &= 4(2\epsilon)^3 = 32\epsilon,
\end{align*}
which implies $d = 7\epsilon$. Since $d$ is positive, we must have $\epsilon = 1$ and $d = 7$, hence $n = -10$. The bijection \eqref{eq:flipping bijection} gives the other solution $n=-32,d=7$.
\end{proof}

We have therefore reduced matters to proving Proposition \ref{prop:only solns to pair of cubic eqns}. We need the following result.

\begin{lemma}\label{lem:EC over K}
Let $\alpha = \sqrt[3]{5}$, and set $K = \mathbb{Q}(\alpha)$. Write $\mathcal{O}_K = \mathbb{Z}[\alpha]$ for the ring of integers of $K$. The only solutions $(x,y,u) \in \mathbb{Z} \times \mathbb{Z}\times \mathcal{O}_K$ to
\begin{align}\label{eq:cubic over K}
(\alpha x - y)(x^2-xy+y^2) = 3(2-\alpha) u^3
\end{align}
with $xy \neq 0$ and $x,y$ coprime are $(x,y,u) = \pm (1,2,-1)$.
\end{lemma}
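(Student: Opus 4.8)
The plan is to recognize \eqref{eq:cubic over K} as a genus-one curve over $K$ and to apply elliptic curve Chabauty. Treating $x,y,u$ as homogeneous coordinates, the left-hand side is a cubic form in $(x,y)$ while the right-hand side is a cubic in $u$, so the equation cuts out a plane cubic $C \subset \mathbb{P}^2_{(x:y:u)}$ defined over $K$. First I would verify that $C$ is smooth, so that it has genus one. Since $(0:0:1) \notin C$, projection away from this point gives a degree-three morphism $\psi\colon C \to \mathbb{P}^1$, $(x:y:u) \mapsto (x:y)$, defined over $K$. Every solution of the lemma is then a $K$-rational point of $C$ whose image under $\psi$ lies in $\mathbb{P}^1(\mathbb{Q})$; conversely, from such a point one reads off $x,y$ (coprime integers after scaling) and $u \in K$, and checks the side conditions $xy \neq 0$ and $u \in \mathcal{O}_K$. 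Thus the lemma reduces to computing the finite set $\{P \in C(K) : \psi(P) \in \mathbb{P}^1(\mathbb{Q})\}$.

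To put this in the standard framework, I would use the known solution $(x,y,u)=(1,2,-1)$, which furnishes the point $(1:2:-1) \in C(K)$ (one checks $(\alpha-2)\cdot 3 = 3(2-\alpha)(-1)^3$), making $(C,(1:2:-1))$ an elliptic curve $E$ over $K$. I would then compute an explicit Weierstrass model for $E$ over $K$ and transport $\psi$ to a degree-three function $E \to \mathbb{P}^1$ defined over $K$. The elliptic curve Chabauty method applies to the pair $(E,\psi)$: provided that $\operatorname{rank} E(K) < [K:\mathbb{Q}] = 3$, one can determine, by working $\mathfrak{p}$-adically in the formal group at a suitable prime $\mathfrak{p}$ of $K$ and intersecting with the Mordell--Weil lattice, the complete (finite) list of $P \in E(K)$ with $\psi(P) \in \mathbb{P}^1(\mathbb{Q})$.

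The main obstacle is the input required by Chabauty: a provably correct computation of the Mordell--Weil group $E(K)$, and in particular a proof that $\operatorname{rank} E(K) \leq 2$. This involves a descent over the cubic field $K$ (facilitated by $K$ having class number one and an explicit unit group), finding explicit generators, and saturating to ensure no generator is missed; if the rank turned out to be $3$ or more the method would break down and a different cover would be needed. Granting a successful rank computation, the remaining Chabauty calculation and the verification of the rank hypothesis are essentially mechanical in Magma.

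Finally, I would translate each surviving point of $E(K)$ back through the birational map to a triple $(x:y:u)$, discard those with $xy = 0$, with $x,y$ not coprime, or with $u \notin \mathcal{O}_K$, and confirm that the only admissible solutions are $(x,y,u) = \pm(1,2,-1)$. A useful sanity check along the way is that spurious rational values of $(x:y)$ are ruled out by norms: for $(x:y:u)\in C(K)$ the quantity $(\alpha x - y)(x^2-xy+y^2)/\bigl(3(2-\alpha)\bigr)$ must be a cube in $K$, so its norm to $\mathbb{Q}$ must be a rational cube, which already excludes many candidate ratios.
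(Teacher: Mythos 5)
Your proposal follows essentially the same route as the paper: use the known point $(1,2,-1)$ to put the plane cubic into Weierstrass form as an elliptic curve $E/K$, then apply elliptic curve Chabauty (with the Mordell--Weil sieve, in Magma) to the degree-three map $(x:y:u)\mapsto(x:y)$ to find all points with rational image, concluding $x/y=1/2$ and hence $(x,y,u)=\pm(1,2,-1)$. The rank hypothesis you flag as the main obstacle is indeed satisfied: the paper computes $\operatorname{rank}E(K)=2<3=[K:\mathbb{Q}]$, so the method goes through exactly as you outline.
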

\begin{proof}
We may use the point $(x,y,u) = (1,2,-1)$ to transform \eqref{eq:cubic over K} into an elliptic curve over $K$ in Weierstrass form\footnote{Sage code to verify the Weierstrass transformation is provided in \text{Lemma-4d2-weierstrass.sage} in our GitHub repository \cite{Pratt_cubes_2025}.}. Define constants
\begin{align*}
\rho &= \frac{27}{5}\alpha^2+\frac{9}{5}\alpha-\frac{99}{5}, \\
\sigma &=  \frac{1377}{25}\alpha^2-\frac{891}{25}\alpha-\frac{2511}{25}, \\
c_1 &= -\frac{1}{5}\alpha^2 - \alpha + 1, \\
c_2 &= -\frac{5}{18}\alpha^2 - \frac{5}{9}\alpha - \frac{5}{6}, \\
c_3 &= \frac{1}{6}\alpha^2 +\frac{5}{18}\alpha + \frac{5}{9}.
\end{align*}
We write points on the elliptic curve in homogeneous coordinates $(X,Y,Z)$, and find that if
\begin{align}
(X,Y,Z) = (-u,c_1 x, c_2x+c_3 y),
\end{align}
then $(X,Y,Z)$ is a point on the elliptic curve
\begin{align*}
E : Y^2Z + \rho YZ^2 = X^3 + \sigma Z^3.
\end{align*}
The rank of $E/K$ is two. Since $2 < 3=[K:\mathbb{Q}]$, this opens the possibility of using the elliptic curve Chabauty method \cite{Bru2003}.

By considering the linear system relating $X,Y,Z$ to $x,y,u$, we find
\begin{align*}
(x,y,u) = (d_1Y, d_2Y+d_3Z,-X),
\end{align*}
where
\begin{align*}
d_1 &= -\frac{1}{6}\alpha^2 - \frac{1}{6}\alpha = \frac{1}{c_1}, \\
d_2 &= -\frac{1}{6}\alpha^2 - \frac{5}{6} = -\frac{c_2}{c_1c_3}, \\
d_3 &= -\frac{9}{5}\alpha^2 - \frac{9}{5}\alpha + 9 = \frac{1}{c_3}.
\end{align*}
It therefore suffices to determine all those points $(X,Y,Z)$ on $E$ such that $(d_1Y,d_2Y+d_3Z)$ is rational. More specifically, we study the map
\begin{align*}
\varphi: E \rightarrow \mathbb{P}^1(\mathbb{Q}), \ \ \ \ (X,Y,Z) \mapsto (d_1Y,d_2Y+d_3Z).
\end{align*}
A Magma computation using Chabauty and the Mordell-Weil sieve \cite{BS2010,Sik2015} finds that $\varphi(E) = \{(1/2,1)\}$\footnote{Magma code to verify this computation is provided in \text{Lemma-4d2-chabauty.m} in our GitHub repository \cite{Pratt_cubes_2025}.}. It follows that if $(x,y,u) \in \mathbb{Z}\times \mathbb{Z} \times \mathcal{O}_K$ is a solution to \eqref{eq:cubic over K}, then $x/y =1/2$. Since $x$ and $y$ are coprime integers, we have $x=\epsilon,y=2\epsilon$, for $\epsilon \in \{\pm 1\}$. Inserting these expressions for $x$ and $y$ into \eqref{eq:cubic over K}, we find $u =- \epsilon$.
\end{proof}

\begin{proof}[Proof of Proposition \ref{prop:only solns to pair of cubic eqns}]
Assume $x,y,z,w$ are nonzero integers with $x,y$ coprime such that $x^3+y^3=9z^3$ and $5x^3-y^3=3w^3$. Since $x$ and $y$ are coprime, we see $3 \nmid xy$; we will use this fact repeatedly in what follows.

The proof strategy is to show $x$ and $y$ must satisfy \eqref{eq:cubic over K} for some $u \in \mathcal{O}_K$, whereupon an appeal to Lemma \ref{lem:EC over K} finishes the proof. In order to carry this out, we factor $x^3+y^3$ and $5x^3-y^3$ and study potential common factors.

We begin by studying the first equation $x^3+y^3=9z^3$. We factor the sum of cubes to get 
\begin{align*}
(x+y)(x^2-xy+y^2)=9z^3.
\end{align*}
We claim that $\text{gcd}(x+y,x^2-xy+y^2)=3$. If $p$ is a prime that divides $x+y$ and $x^2-xy+y^2$, then $y\equiv -x \pmod{p}$ and $0 \equiv x^2-xy+y^2\equiv 3x^2 \pmod{p}$. Since $x$ and $y$ are coprime we have $p \nmid x$, so $p=3$. We similarly see that $3 \mid (x+y)$ implies $3 \mid (x^2-xy+y^2)$. If $3 \mid (x^2-xy+y^2)$ but $3 \nmid (x+y)$, then $0 \equiv x^2-xy+y^2 = \frac{x^3+y^3}{x+y} \equiv \frac{x+y}{x+y} \equiv 1\pmod{3}$, and this is a contradiction. Therefore, $3 \mid (x+y)$ if and only if $3 \mid (x^2-xy+y^2)$. We deduce that $x+y$ and $x^2-xy+y^2$ are each divisible by $3$ since their product is. Finally, we see that $9 \nmid (x^2-xy+y^2)$, since if we set $y=-x+3k$ for some $k \in \mathbb{Z}$, we have
\begin{align*}
x^2-xy+y^2 = 3(3k^2-3kx+x^2),
\end{align*}
and the term in parentheses is not divisible by $3$. We therefore have $\text{gcd}(x+y,x^2-xy+y^2)=3$, and since $(x+y)(x^2-xy+y^2)=9z^3$ we deduce
\begin{align}\label{eq:fac diff of cubes to get three times cubes}
x+y &= 3m^3, \ \ \ \ \ x^2-xy+y^2=3n^3
\end{align}
for some integers $m,n$.

Now we turn to the equation $5x^3-y^3=3w^3$. We factor the left-hand side over the number field $K=\mathbb{Q}(\sqrt[3]{5})=\mathbb{Q}(\alpha)$ appearing in the statement of Lemma \ref{lem:EC over K}, so that 
\begin{align}\label{eq:factored 5x3-y3}
5x^3-y^3=(\alpha x - y)(\alpha^2 x^2+\alpha x y + y^2)=3w^3.
\end{align}
Recall the ring of integers $\mathcal{O}_K$ of $K$ is $\mathbb{Z}[\alpha]$. The ring $\mathcal{O}_K$ has class number one, and has a unit group of rank one with the non-torsion part generated by the fundamental unit $\varepsilon_0 = 1-4\alpha+2\alpha^2$. The only ramified rational primes in $\mathcal{O}_K$ are 3 and 5, and they are both totally ramified. We write $\mathfrak{p}_3=2-\alpha,\mathfrak{p}_5=\alpha$ for the primes of $\mathcal{O}_K$ above 3 and 5, respectively. Note that $3 = \mathfrak{p}_3^3 \varepsilon_0^{-1}$.

We need to determine the common factors of $\alpha x - y$ and $\alpha^2 x^2+\alpha x y + y^2$ in $\mathcal{O}_K$. Let $\pi$ be a prime of $\mathcal{O}_K$ that divides both terms. Then $y \equiv \alpha x \pmod{\pi}$, and $0\equiv \alpha^2 x^2+\alpha x y + y^2 \equiv 3\alpha^2 x^2 \pmod{\pi}$. We cannot have $\pi \mid x$ since $x$ and $y$ are coprime, so $\pi = \mathfrak{p}_3$ or $\pi = \mathfrak{p}_5$. 

Neither $\alpha x - y$ nor $\alpha^2 x^2+\alpha x y + y^2$ can be divisible by $\mathfrak{p}_5$. Otherwise, the equation  $5x^3-y^3=(\alpha x - y)(\alpha^2 x^2+\alpha x y + y^2)$ implies $\alpha \mid y$, and therefore $5 \mid y$. If we write $y=5y_1$, then we have $5x^3-125y_1^3=3w^3$, and therefore $5 \mid w$. Writing $w = 5w_1$, we deduce $x^3-25y_1^3=75w_1^3$, and therefore $5 \mid x$, but this contradicts the fact that $x$ and $y$ are coprime.

Therefore, if $\alpha x - y$ and $\alpha^2 x^2+\alpha x y + y^2$ have a prime of $\mathcal{O}_K$ in common, it must be $\mathfrak{p}_3$. At least one of the terms is divisible by $\mathfrak{p}_3$ since $5x^3-y^3=3w^3$, and we claim $\mathfrak{p}_3 \mid (\alpha x - y)$ if and only if $\mathfrak{p}_3 \mid (\alpha^2 x^2+\alpha x y + y^2)$. The argument is essentially identical to the one used in studying $x^3+y^3=9z^3$; here we use the fact that the residue field of $\mathfrak{p}_3$ is isomorphic to the finite field with three elements. Thus, $\alpha x - y$ and $\alpha^2x^2+\alpha xy + y^2$ are both divisible by $\mathfrak{p}_3$.

We have shown that $\alpha x - y$ and $\alpha^2 x^2+\alpha x y + y^2$ have only the factor $\mathfrak{p}_3$ in common. We claim that $\mathfrak{p}_3$ divides $\alpha x - y$ exactly once. Write 
\begin{align*}
y = \alpha x + \mathfrak{p}_3^j f,
\end{align*}
where $j$ is a positive integer, $f \in \mathcal{O}_K$, and $\mathfrak{p}_3 \nmid f$. Substituting this expression for $y$, we have
\begin{align*}
\alpha^2 x^2 + \alpha x y + y^2 &= 3\alpha^2 x^2 + 3\alpha f x \mathfrak{p}_3^j + f^2 \mathfrak{p}_3^{2j} = \alpha^2 x^2 \varepsilon_0^{-1}\mathfrak{p}_3^3  + \alpha x f \varepsilon_0^{-1} \mathfrak{p}_3^{j+3} + f^2 \mathfrak{p}_3^{2j}.
\end{align*}
If we assume by way of contradiction that $j\geq 2$, then
\begin{align*}
\alpha^2 x^2 + \alpha x y + y^2 &= \mathfrak{p}_3^3 (\alpha^2 x^2 \varepsilon_0^{-1} + \alpha x f \varepsilon_0^{-1} \mathfrak{p}_3^{j}+f^2\mathfrak{p}_3^{2j-3}).
\end{align*}
Note that $\alpha^2 x^2 \varepsilon_0^{-1} + \alpha x f \varepsilon_0^{-1} \mathfrak{p}_3^{j}+f^2\mathfrak{p}_3^{2j-3}$ is not divisible by $\mathfrak{p}_3$, since $\alpha^2 x^2 \varepsilon_0^{-1}$ is not divisible by $\mathfrak{p}_3$. Hence $\alpha^2 x^2 + \alpha x y + y^2$ is exactly divisible by $\mathfrak{p}_3^3$, and we see by \eqref{eq:factored 5x3-y3} that $j$ is a multiple of 3. It follows that $9 \mid (5x^3-y^3)$, so $y^3 \equiv 5x^3 \pmod{9}$. The equation $x^3+y^3=9z^3$ then implies $0 \equiv x^3+y^3 \equiv 6x^3 \pmod{9}$, and therefore $3 \mid x$, which is a contradiction. Hence, $\alpha x - y$ is only divisible by $\mathfrak{p}_3$ to the first power. 

By considering \eqref{eq:factored 5x3-y3} again, we see by unique factorization that
\begin{align*}
\alpha x - y = (2-\alpha)r v^3,
\end{align*}
for some unit $r \in \mathcal{O}_K^\times$ and $v \in \mathcal{O}_K$. We have $r = \pm \varepsilon_0^b$, and by folding $-1$ and powers of $\varepsilon_0$ into $v$, we may assume $r = \varepsilon_0^b$ for $b \in \{0,1,2\}$. Therefore
\begin{align}\label{eq:factored ax-y}
\alpha x - y = (2-\alpha)\varepsilon_0^b v^3.
\end{align}

We claim that $b=0$. If $b=1$, write $v = f + g\alpha + h \alpha^2$, then expand out in \eqref{eq:factored ax-y} and compare the coefficients of $\alpha$ to obtain
\begin{align*}
x = &-9f^3 - 24f^2g + 120fg^2 - 45g^3 + 120f^2h \\ 
&- 270fgh - 120g^2h - 120fh^2 + 600gh^2 - 225h^3.
\end{align*}
Since every coefficient of the cubic form is divisible by 3, we have $3 \mid x$. This is a contradiction. If $b=2$, we similarly expand out and find that $3 \mid y$, which is also a contradiction. We conclude that $b=0$\footnote{Sage code to verify that $b=0$ is provided in \text{Proof-of-Prop-4d1-b-is-zero.sage} in our GitHub repository \cite{Pratt_cubes_2025}.}.

We multiply together $\alpha x - y$ and $x^2-xy+y^2$. By \eqref{eq:fac diff of cubes to get three times cubes} and \eqref{eq:factored ax-y} with $b=0$, we find
\begin{align*}
(\alpha x - y)(x^2-xy+y^2) = 3(2-\alpha)(vn)^3 = 3(2-\alpha)u^3,
\end{align*}
for some $u\in \mathcal{O}_K$. By Lemma \ref{lem:EC over K}, we must have $(x,y) = \pm (1,2)$, and therefore $(z,w) = \pm (1,-1)$.
\end{proof}

\section*{Acknowledgments}

The author is partially supported by the National Science Foundation (DMS-2418328) and the Simons Foundation (MPS-TSM-00007959).

\bibliographystyle{plain}
\bibliography{refs}

\end{document}